
\documentclass[letterpaper, 10 pt, conference]{ieeeconf}  

\IEEEoverridecommandlockouts                              
\overrideIEEEmargins


\usepackage{amsmath}
\usepackage{amssymb}
\usepackage{amsthm}
\usepackage{mathtools}
\usepackage{xifthen}
\usepackage{cite}
\usepackage{algorithm}

\usepackage[shortlabels]{enumitem}
\setlist[enumerate]{nosep}

\usepackage{graphicx}
\usepackage{color}

\usepackage{calc}

\usepackage{lipsum}

\newcommand{\tcap}{\text{cap}}
\newcommand{\sumi}{\sum_{i=1}^n}
\newcommand{\sumj}{\sum\nolimits_{j\neq i}^n}

\renewcommand{\vec}[1]{\mathbf{#1}}
\def\vecx{\vec{x}}
\def\vecy{\vec{y}}
\def\vecz{\vec{z}}
\def\vecv{\vec{v}}
\def\vecw{\vec{w}}
\def\vecvh{\widehat{\vecv}}
\def\cX{\mathcal{X}}
\def\agX{\overline{\mathcal{X}}}
\def\agx{\overline{\vecx}}

\newcommand{\x}[2][]{\ifthenelse{\isempty{#1}}{\mathbf{x}_{#2}^k}{\mathbf{x}_{#2}^{k + {#1}}}}
\newcommand{\vv}[2][]{\ifthenelse{\isempty{#1}}{\mathbf{v}_{#2}^k}{\mathbf{v}_{#2}^{k + {#1}}}}
\newcommand{\hatx}[1][i]{\widehat{\mathbf{x}}_{#1}^{k + 1}}
\newcommand{\X}[1][]{\ifthenelse{\isempty{#1}}{X^k}{X^{k + {#1}}}}
\newcommand{\V}[1][]{\ifthenelse{\isempty{#1}}{V^k}{V^{k + {#1}}}}
\newcommand{\barx}[1][]{\ifthenelse{\isempty{#1}}{\overline{\mathbf{x}}^k}{\overline{\mathbf{x}}^{k + {#1}}}}
\newcommand{\hatX}{\widehat{X}^{k + 1}}
\newcommand{\barX}[1][]{\ifthenelse{\isempty{#1}}{\overline{X}^k}{\overline{X}^{k + {#1}}}}
\def\xstar{\mathbf{x}^*}
\def\xbarstar{\overline{\mathbf{x}}^*}

\newcommand{\XX}[1][]{\ifthenelse{\isempty{#1}}{\mathcal{X}_i}{\mathcal{X}_{#1}}}
\newcommand{\F}[2][]{\ifthenelse{\isempty{#1}}{F_{#2}^k}{F_{#2}^{#1}}}
\newcommand{\hatF}[1][]{\ifthenelse{\isempty{#1}}{\widehat{F}^k}{\widehat{F}_{#1}^k}}

\def\Lbar{\overline{L}}
\def\tr{\text{tr}}

\def\T{\mathsf{T}}
\def\bone{{\mathbf{1}}}

\def\fra{\frac{1}{n}}

\def\cG{\mathcal{G}}
\def\cN{\mathcal{N}}
\def\cE{\mathcal{E}}

\newtheorem{lem}{Lemma}
\newtheorem{prop}{Proposition}
\newtheorem{assum}{Assumption}
\theoremstyle{remark}
\newtheorem{rem}{Remark}
\theoremstyle{definition}
\newtheorem{defin}{Definition}

\title{\LARGE \bf
Networked Aggregative Games with Linear Convergence}

\author{Rongping Zhu, Jiaqi Zhang and Keyou You
  \thanks{This work was supported by Technology and Innovation Major Project of the Ministry of Science and Technology of China under Grant 2020AAA0108400 and 2020AAA0108403.}
  \thanks{The authors are with Department of Automation, and BNRist, Tsinghua University, Beijing 100084, China. E-mail: {\tt\small \{zhurp19, zjq16\}@mails.tsinghua.edu.cn}, {\tt\small youky@tsinghua.edu.cn.}}%
}

\begin{document}

\maketitle
\thispagestyle{empty}
\pagestyle{empty}

\begin{abstract}

This paper considers a networked aggregative game (NAG) where the players are distributed over a communication network. By only communicating with a subset of players, the goal of each player in the NAG is to minimize an individual cost function that depends on its own action and the aggregate of all the players' actions.
To this end, we design a novel distributed algorithm that jointly exploits the ideas of the consensus algorithm and the conditional projection descent. 
Under strongly monotone assumption on the pseudo-gradient mapping, the proposed algorithm with fixed step-sizes is proved to  converge linearly to the unique Nash equilibrium of the NAG.
Then the theoretical results are validated by numerical experiments.

\end{abstract}

\section{INTRODUCTION}
\label{sec:intro}

An aggregative game is a non-cooperative Nash game where each player's cost function depends on its own action and the aggregate of all players' actions.
Such games are elaborately introduced in \cite{jensen2006aggregative} and \cite{jensen2010aggregative},
and have been widely applied in resource allocation \cite{alpcan2002game, barrera2015dynamic, basar2016price}, demand response \cite{ma2013decentralized, li2015demand}, etc.
This work considers \textit{Networked Aggregative Games} (NAGs) where players are distributed over a peer-to-peer (P2P) communication network and no central coordinator is included.
To reach the Nash equilibrium (NE) of an NAG,
players only communicates with their neighbors that are defined over the network
to mitigate the lack of global information about the aggregate action.
Note that the NAG is still a non-cooperative game in the sense that each player selfishly minimizes its local cost function.

Distributed seeking of NE has extensively been studied in recent years.
For games where players' cost functions depend on others' actions in a general manner,
researchers have proposed fully distributed algorithms based on alternating direction method of multipliers \cite{salehisadaghiani2019distributed}, 
best-response dynamics \cite{bianchi2020fully}, 
and projected-gradient method \cite{tatarenko2019geometric, tatarenko2020geometric, bianchi2021fully}.
In these algorithms, players utilize local communications to estimate others' actions,
and the NE is found by solving equivalent variational inequality problems \cite{facchinei2007finite}.
Linear convergence to the NE is proved for these algorithms when the games possess strongly monotone pseudo-gradient mappings, which coincides with the results of centralized algorithms \cite{facchinei2007finite}.

However, the above mentioned works require each player to estimate all the other players' actions and exchange its estimates with neighbors,
which is not scalable with the network size and results in large communication overhead for NAGs. Clearly, it is not necessary here as we only need the aggregate action. Moreover, each player in each iteration updates from an inferior point rather than its own action.
The so-called inferior point is obtained from the mixture of each player's own action and neighbor's estimates, which is essential for the consensus part.
Thus, these algorithms may get stuck in the beginning phase with slow convergence rate, which is also confirmed in our numerical experiments in Section \ref{sec:numeric}.

By exploiting the NAG's structure, distributed algorithms with communication-efficiency have been designed.
Researchers in \cite{koshal2016distributed} and \cite{belgioioso2020distributed} adopted a consensus protocol to dynamically track the aggregate action and used projected-gradient methods to update each player's action.
Since only aggregate estimates are exchanged, compared with the algorithms in \cite{salehisadaghiani2019distributed, bianchi2020fully, tatarenko2019geometric, tatarenko2020geometric, bianchi2021fully}, there is an $n$-fold reduction in the amount of data that is transmitted in each iteration, where $n$ is the number of players.
However, the convergence was shown only for diminishing step-sizes with strictly monotone pseudo-gradient mappings.
Authors in \cite{zhu2021asynchronous} proposed an asynchronous consensus-based algorithms and numerically showed the potential of linear convergence for fixed step-sizes.
Nonetheless, theoretical guarantee of linear convergence of NAG still remains an open question \cite{yi2020new}.

In this work, we propose a novel linearly convergent algorithm to find an NE of the NAGs.
The proposed algorithm consists of three key components: (a)
a consensus term to dynamically track the aggregate action; (b)
a projected-gradient step to compute a feasible direction; and (c)
a step along the feasible direction to update the actions.
Then, the linear convergence is proved via establishing a linear system inequality. To the best of our knowledge, our algorithm is the first one that is efficient for NAGs and converges linearly.
Finally, numerical experiments confirm that our algorithm outperforms the existing pseudo-gradient based ones.

The rest of this paper is organized as follows.
In Section \ref{sec:pre}, we introduce the NAGs and some preliminaries.
Section \ref{sec:main} devotes to describing our algorithm with the convergence analysis.
In Section \ref{sec:numeric}, we validate the proposed algorithm on a class of Nash-Cournot games.
Concluding remarks are drawn in Section \ref{sec:concousion}.

\textbf{Notations:} Throughout the paper, we use $\vecx^\T$ and $X^\T$ to denote the transposes of a vector $\vecx$ and a matrix $X$, respectively.
Let $\|\vecx\| = \sqrt{\vecx^\T \vecx}$ denote the Euclidean norm of $\vecx$.
The inner product of matrices $X$ and $Y$ is denoted by $\langle X, Y \rangle_F \triangleq \tr(X^\T Y)$, where $\tr(\cdot)$ denotes the trace of a matrix.
Then, the Frobenius norm of $X$ is defined as $\|X\|_F \triangleq \sqrt{\langle X, X \rangle_F}$.
We use $[\vecx_1, \cdots, \vecx_n]$ and $[\vecx_1; \cdots; \vecx_n]$ to denote the horizontal and vertical stack of vectors $\vecx_1, \ldots, \vecx_n$, respectively.
For matrix $X$, we write $A_{ij}$ to denote its $(i, j)$-th element and $A_{:j}$ to denote its $j$-th column vector.
$\bone$ denotes the vector with all entries equal to 1.
Finally, we use $\Pi_{\cX}[\cdot]$ to denote the Euclidean projection of a vector to a closed convex set $\cX$, i.e., $\Pi_{\cX}[\vecx] \triangleq \arg \min_{\vecz \in \cX} \|\vecx - \vecz\|$.

\section{PRELIMINARIES}
\label{sec:pre}

In this section, we first introduce the NAGs over a multi-agent network.
Then, we reformulate the NAG as a variational inequality which leads to our projection-based algorithm design.
Finally, we introduce some common assumptions.

\subsection{Networked Aggregative Games}

Consider a set of $n$ players indexed by $\mathcal{N} = \{1, 2, \cdots, n\}$.
In an aggregative game, player $i$'s cost function $f_i(\vec{x}_i, \bar{\vec{x}})$ depends on its own action $\vec{x}_i \in \XX \subset \mathbb{R}^p$ and the aggregate action $\bar{\vec{x}} = \frac{1}{n} \sumi \vec{x}_i$.
The objective of player $i$ in the AG is to minimize its local cost:
\begin{equation}
  \label{eq:prob}
  \begin{aligned}
    \text{minimize} & \quad f_i(\vec{x}_i, \bar{\vec{x}}), \\
    \text{subject to} & \quad \vec{x}_i \in \XX,
  \end{aligned}
\end{equation}
where $\vec{x}_j$, $j \neq i$, $j \in \mathcal{N}$ are fixed, and to reach an NE among all players \cite{bacsar1998dynamic}.

\begin{defin}[Nash Equilibrium]
  An $n$-tuple of actions $\vec{x}^* = [\vec{x}_1^*; \vec{x}_2^*; \cdots; \vec{x}_n^*]$ is a Nash equilibrium (NE) of the AG \eqref{eq:prob} if for all $i \in \mathcal{N}$ and $\vec{x}_i \in \XX$,
  $$\textstyle f_i(\vec{x}_i^*, \bar{\vec{x}}^*) \leq f_i(\vec{x}_i, \frac{1}{n}\vec{x}_i + \frac{1}{n} \sumj \vec{x}_j^*).$$
\end{defin}

This work considers the NAG where the players are distributed over a peer-to-peer (P2P) network and have no direct access to the aggregate action.
Instead, every player can only communicate with neighboring players defined by the network.
We model the P2P network by an undirected graph $\cG(\cN, \cE)$ and a weight matrix $W$, where $\cN$ is the set of players and $\cE \subseteq \cN \times \cN$ is the edge set of communications, i.e., $(i, j) \in \cE$ if and only if player $i$ can exchange information directly with player $j$.
The weight matrix $W = \{w_{ij}\}_{n \times n}$ satisfies that $w_{ij} > 0$ if $(i, j) \in \cE$ or $i = j$, and $w_{ij} = 0$ otherwise.
The neighbors of player $i$ is a set $\{j|(i, j) \in \cE\}$.

\subsection{Variational Inequality Approach}
\label{subsec:vi}

The following assumption on the action sets and cost functions is common in the literature \cite{facchinei2007finite}.

\begin{assum}
  \label{as:game}
  For each player $i \in \cN$, the action set $\XX$ is compact and convex.
  Each local cost function $f_i(\vecx_i, \vecz)$ is continuously differentiable in $(\vecx_i, \vecz)$ over some open set containing $\XX \times \overline{\mathcal{X}}$, where
  \begin{equation*}
    \textstyle \agX = \{\fra \sumi \vecx_i|\vecx_i \in \cX_i\}.
  \end{equation*}
  Moreover, the function $\vecx_i \to f_i(\vec{x}_i, \frac{1}{n} \vecx_i + \bar{\vecx}_{-i})$ is convex in $\vecx_i$ over $\XX$ for any fixed $\bar{\vecx}_{-i}$.
\end{assum}

Under Assumption \ref{as:game}, an NE of the AG \eqref{eq:prob} can be reached by solving a variational inequality \cite{facchinei2007finite},
which is to find an $\vecx^* \in \cX$ such that
\begin{equation}
  \label{eq:vi}
  (\vecx - \vecx^*)^\T \phi(\vecx^*) \geq 0, \quad \forall \vecx \in \cX,
\end{equation}
where $\cX = \prod_{i=1}^n \XX$ and $\phi(\vecx)$ is the pseudo-gradient mapping defined as
$$ 
  \phi(\vecx) = [\nabla_{\vecx_1} f_1(\vecx_1, \agx); \nabla_{\vecx_2} f_2(\vecx_2, \agx); \cdots; \nabla_{\vecx_n} f_n(\vecx_n, \agx)].
$$

To emphasize the coordinate of the pseudo-gradient, we define the mapping
$$ F(\vecx, \vecz) = [F_1(\vecx_1, \vecz); F_2(\vecx_2, \vecz); \cdots; F_n(\vecx_n, \vecz)], $$
where
$$ \textstyle F_i(\vecx_i, \vecz) = \nabla_{\vecx_i}f_i(\vecx_i, \vecz) + \fra \nabla_{\vecz}f_i(\vecx_i, \vecz). $$
Clearly, we have $\phi(\vecx) = F(\vecx, \agx)$.

\subsection{Some Assumptions and Lemmas}

We first introduce the two assumptions on the pseudo-gradient mapping, which are necessary for the linear convergence of projection type algorithms even in the centralized setting \cite{facchinei2007finite},
and are commonly used in the literature, see e.g., \cite{salehisadaghiani2019distributed, bianchi2020fully, tatarenko2019geometric, tatarenko2020geometric, bianchi2021fully}.

\begin{assum}[Strong Monotonicity]
  \label{as:monotone}
  The pseudo-gradient $\phi$ is strongly monotone over $\cX$, i.e., there exists a constant $\mu > 0$ such that 
  $$ (\vecx - \vecx')^\T (\phi(\vecx) - \phi(\vecx')) \geq \mu \|\vecx - \vecx'\|^2, \forall \vecx, \vecx' \in \cX. $$
\end{assum}

\begin{assum}[Lipschitz Continuity]
  \label{as:FL}
  Each mapping $F_i$ is uniformly Lipschitz continuous over $\XX \times \agX$, i.e., there exists a constant $L > 0$ such that for any $\vecx_i, \vecx_i' \in \XX$ and $\vecz, \vecz' \in \agX$, it holds
  $$
    \| F_i(\vecx_i, \vecz) - F_i(\vecx_i', \vecz') \| \leq
    L \left\| [\vecx; \vecz] - [\vecx'; \vecz'] \right\|.
  $$
\end{assum}

\begin{lem}[Thm. 2.3.3 in \cite{facchinei2007finite}]
  \label{lm:unique}
  Under Assumptions \ref{as:game} and \ref{as:monotone}, the AG in \eqref{eq:prob} has a unique NE.
\end{lem}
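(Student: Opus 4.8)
The plan is to reduce Lemma \ref{lm:unique} to the solvability and uniqueness of the variational inequality \eqref{eq:vi}. Under Assumption \ref{as:game} the set of NE of the AG \eqref{eq:prob} coincides with the solution set of \eqref{eq:vi} over $\cX = \prod_{i=1}^n \XX$ (this is exactly the equivalence already recalled before \eqref{eq:vi}, and it rests on the fact that, by Assumption \ref{as:game}, $\vecx_i \mapsto f_i(\vecx_i, \frac{1}{n}\vecx_i + \bar{\vecx}_{-i})$ is convex, so player $i$'s best-response optimality condition is precisely a first-order VI-type inequality; stacking these gives \eqref{eq:vi}). Hence it suffices to show \eqref{eq:vi} has exactly one solution in $\cX$. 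I would split this into an existence part and a uniqueness part.

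\textbf{Existence.} First I would note that $\cX$ is a finite product of compact convex sets, hence compact and convex, and that $\phi$ is continuous on $\cX$: each $f_i$ is continuously differentiable on an open set containing $\XX \times \agX$ by Assumption \ref{as:game}, and $\vecx \mapsto \agx = \frac{1}{n}\sum_{i=1}^n \vecx_i$ is linear, so $\phi(\vecx) = F(\vecx,\agx)$ is continuous. Then I would consider the natural map $T(\vecx) = \Pi_{\cX}[\vecx - \phi(\vecx)]$; since projection onto a closed convex set is nonexpansive, $T$ is a continuous self-map of the compact convex set $\cX$, and Brouwer's fixed point theorem gives a fixed point $\vecx^* \in \cX$. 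Applying the variational characterization of the projection, $(\vecx - \Pi_{\cX}[\vecy])^\T(\vecy - \Pi_{\cX}[\vecy]) \le 0$ for all $\vecx \in \cX$, with $\vecy = \vecx^* - \phi(\vecx^*)$ and $\Pi_{\cX}[\vecy] = \vecx^*$, yields $(\vecx - \vecx^*)^\T \phi(\vecx^*) \ge 0$ for all $\vecx \in \cX$, i.e. $\vecx^*$ solves \eqref{eq:vi}. (Alternatively one could cite that a continuous strongly monotone VI over a closed convex set is always solvable, which would not even use compactness.)

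\textbf{Uniqueness.} Suppose $\vecx^*$ and $\vecy^*$ both solve \eqref{eq:vi}. Putting $\vecx = \vecy^*$ in the inequality at $\vecx^*$ gives $(\vecy^* - \vecx^*)^\T \phi(\vecx^*) \ge 0$, and putting $\vecx = \vecx^*$ in the inequality at $\vecy^*$ gives $(\vecx^* - \vecy^*)^\T \phi(\vecy^*) \ge 0$. Adding these yields $(\vecx^* - \vecy^*)^\T(\phi(\vecx^*) - \phi(\vecy^*)) \le 0$. On the other hand, Assumption \ref{as:monotone} gives $(\vecx^* - \vecy^*)^\T(\phi(\vecx^*) - \phi(\vecy^*)) \ge \mu \|\vecx^* - \vecy^*\|^2$. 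Combining the two, $\mu\|\vecx^* - \vecy^*\|^2 \le 0$, and since $\mu > 0$ we get $\vecx^* = \vecy^*$. Together with existence and the NE–VI equivalence, this shows the AG \eqref{eq:prob} has a unique NE.

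I do not expect a real obstacle in this argument; it is classical (Thm. 2.3.3 in \cite{facchinei2007finite}). The only points deserving care are (i) making the NE–VI equivalence precise in the aggregative setting, where the relevant convexity is the one stated in Assumption \ref{as:game} rather than joint convexity of $f_i$, and (ii) observing that strong monotonicity is assumed only on $\cX$, which is enough since both candidate equilibria lie in $\cX$.
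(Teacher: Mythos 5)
Your proof is correct. The paper itself offers no argument for this lemma---it is stated as a direct citation of Thm.~2.3.3 in \cite{facchinei2007finite}---so there is nothing to diverge from; what you have written is a faithful and complete reconstruction of the standard proof behind that citation: the NE--VI equivalence via the convexity in Assumption~\ref{as:game} of the composite map $\vecx_i \mapsto f_i(\vecx_i, \tfrac{1}{n}\vecx_i + \bar{\vecx}_{-i})$, existence by Brouwer's theorem applied to the natural map $\vecx \mapsto \Pi_{\cX}[\vecx - \phi(\vecx)]$ on the compact convex set $\cX$ (or, as you note, by strong monotonicity alone without compactness), and uniqueness by the usual two-inequality argument from Assumption~\ref{as:monotone}. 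The two points you flag as needing care---that the relevant convexity is the composite one rather than joint convexity of $f_i$, and that strong monotonicity on $\cX$ suffices---are exactly the right ones, and your handling of both is sound.
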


Next, we state an assumption on the communication network.

\begin{assum}[Communication Network]
  \label{as:graph} \hfill
  \begin{enumerate}
    \item The graph $\cG$ is connected, i.e., there exists a sequence of consecutive edges to connect any pair of players.
    \item The weight matrix $W$ is doubly stochastic, i.e., $W \bone = W^\T \bone = \bone$.
  \end{enumerate}
\end{assum}

The first part of Assumption \ref{as:graph} ensures that every player's action can affect every other players eventually, which is necessary in the distributed problems.
The second part induces the averaging property of the weight matrix (see Lemma \ref{lm:spectrum}),
and is common in solving distributed problems, see, e.g., \cite{qu2018harnessing,dong2020distributed}.
The selection of such matrices has been extensively studied in the literature  \cite{olshevsky2009convergence, olfati2007consensus, nedic2009distributed}.

\begin{lem}[Thm. 5.1 in \cite{olshevsky2009convergence}]
  \label{lm:spectrum}
  Under Assumption \ref{as:graph}, there exists a constant $\sigma \in (0, 1)$ which is the spectral norm of $W - \fra \bone \bone^\T$, or the second largest singular value of $W$, such that for any $\vecw \in \mathbb{R}^n$,
  $$ \textstyle \|W\vecw - \bone \bar{w}\| = \|(W - \fra \bone \bone^\T)(\vecw - \bone \bar{w})\| \leq \sigma \|\vecw - \bone \bar{w}\|, $$
  where $\bar{w} = \fra \bone^\T \vecw$.
\end{lem}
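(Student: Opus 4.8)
The plan is to reduce the asserted contraction to the single fact that the second largest singular value $\sigma$ of $W$ is strictly below $1$, and then to extract that fact from the connectivity of $\cG$ via Perron--Frobenius theory.

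First I would record the algebraic identity. Because $W\bone=\bone$ and $\bone\bar w=\fra\bone\bone^\T\vecw$ with $\fra\bone^\T\bone=1$, expanding the product gives
\begin{equation*}
  \Bigl(W-\fra\bone\bone^\T\Bigr)(\vecw-\bone\bar w)
  = W\vecw-\fra\bone\bone^\T\vecw-W\bone\bar w+\fra\bone\bone^\T\bone\bar w
  = W\vecw-\bone\bar w,
\end{equation*}
which is the first equality in the statement; I would also note that $\bone^\T(\vecw-\bone\bar w)=\bone^\T\vecw-n\bar w=0$, so $\vecw-\bone\bar w$ lies in the subspace $\bone^{\perp}$.

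Next I would set $M:=W-\fra\bone\bone^\T$ and exploit its block structure: $M\bone=\bzero$, while for any $\vecv\perp\bone$ one has $M\vecv=W\vecv$ with $\bone^\T(W\vecv)=\bone^\T\vecv=0$, so $M$ annihilates $\mathrm{span}(\bone)$ and leaves $\bone^{\perp}$ invariant, acting there exactly as $W$. Hence the operator norm $\|M\|=\sup_{\bzero\neq\vecv\perp\bone}\|W\vecv\|/\|\vecv\|$ equals the second largest singular value $\sigma$ of $W$, and combining with the identity above yields, for every $\vecw$,
\begin{equation*}
  \|W\vecw-\bone\bar w\|=\|M(\vecw-\bone\bar w)\|\le\|M\|\,\|\vecw-\bone\bar w\|=\sigma\,\|\vecw-\bone\bar w\|.
\end{equation*}
It then only remains to show $\sigma<1$; in the degenerate case $M=\bzero$ one may simply replace $\sigma$ by any number in $(0,1)$, which only weakens the bound.

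The hard part is precisely proving $\sigma<1$, and this is where connectivity is used. I would first observe that, $W$ being nonnegative and doubly stochastic, $\|W\|_1=\|W\|_\infty=1$, hence $\|W\|\le1$ and all singular values of $W$ lie in $[0,1]$, with $1$ attained along $\bone$ since $W\bone=W^\T\bone=\bone$. Thus $\sigma^2=\lambda_2(W^\T W)$, and $W^\T W$ is symmetric positive semidefinite and, as a product of doubly stochastic matrices, again doubly stochastic with $W^\T W\bone=\bone$. The decisive point is that $W$ has a strictly positive diagonal and support containing $\cE$: for $i\neq j$ with $(i,j)\in\cE$, $(W^\T W)_{ij}=\sum_k w_{ki}w_{kj}\ge w_{ii}w_{ij}>0$, and $(W^\T W)_{ii}\ge w_{ii}^2>0$; hence the digraph associated with $W^\T W$ contains the connected graph $\cG$ together with all self-loops, so $W^\T W$ is irreducible and aperiodic, i.e., primitive. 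By the Perron--Frobenius theorem, its spectral radius $1$ is then a simple eigenvalue (with eigenvector $\bone$) strictly dominating in modulus every other eigenvalue; since $W^\T W$ is symmetric positive semidefinite, all its eigenvalues are real and nonnegative, so $\lambda_2(W^\T W)\in[0,1)$ and therefore $\sigma\in[0,1)$. The two halves of Assumption~\ref{as:graph} enter exactly here: connectedness of $\cG$ gives the irreducibility of $W^\T W$, and double stochasticity of $W$ gives its doubly stochastic structure and the eigenvalue $1$.
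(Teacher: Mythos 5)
Your proof is correct. The paper itself offers no argument for this lemma---it simply cites Thm.~5.1 of the Olshevsky--Tsitsiklis reference---so there is no internal proof to compare against; what you have written is the standard self-contained derivation. The three pieces all check out: the expansion of $(W-\fra\bone\bone^\T)(\vecw-\bone\bar w)$ using $W\bone=\bone$ and $\bone^\T(\vecw-\bone\bar w)=0$; the identification of $\|W-\fra\bone\bone^\T\|$ with $\sup_{\bzero\neq\vecv\perp\bone}\|W\vecv\|/\|\vecv\|=\sqrt{\lambda_2(W^\T W)}$, which is legitimate because $W^\T W$ is symmetric, stochastic, and has $\bone$ as a top eigenvector; and the strict bound $\lambda_2(W^\T W)<1$ via primitivity of $W^\T W$, where you correctly locate the role of both halves of Assumption~4 (connectivity of $\cG$ plus the positive diagonal $w_{ii}>0$ give irreducibility and aperiodicity, double stochasticity pins the Perron root at $1$). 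You also handle the edge case needed for the statement's claim that $\sigma$ can be taken in the open interval $(0,1)$ (e.g.\ $W=\fra\bone\bone^\T$), which a less careful write-up would miss. The only thing your argument buys beyond the paper is self-containedness; it matches the argument one would find in the cited source.
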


We also use the following property of projection operators.

\begin{lem}[Prop. 1.1.4 in \cite{bertsekas2016nonlinear}]
  \label{lm:projection}
  Let $\cX \subset \mathbb{R}^p$ be a nonempty closed convex set. Then for any $\vecx, \vecy \in \mathbb{R}^p$, we have that $\|\Pi_{\cX}[\vecx] - \Pi_{\cX}[\vecy]\| \leq \|\vecx - \vecy\|$.
\end{lem}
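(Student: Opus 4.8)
The plan is to derive the result from the standard variational (obtuse‑angle) characterization of the Euclidean projection onto a closed convex set, and then combine two instances of it. First I would recall that, since $\cX$ is nonempty, closed and convex, for any $\vecv \in \mathbb{R}^p$ the program $\min_{\vecw \in \cX} \tfrac{1}{2}\|\vecv - \vecw\|^2$ has a unique minimizer, namely $\Pi_{\cX}[\vecv]$ (existence from lower semicontinuity and coercivity of the quadratic, uniqueness from its strict convexity). Writing $\vecz = \Pi_{\cX}[\vecv]$, the first‑order optimality condition for minimizing a convex differentiable function over a convex set gives, with objective gradient $\vecz - \vecv$ at $\vecz$, the inequality
\begin{equation*}
  (\vecv - \vecz)^\T(\vecw - \vecz) \leq 0 \qquad \text{for all } \vecw \in \cX,
\end{equation*}
and conversely any $\vecz \in \cX$ satisfying this is the projection.

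Next, I would set $\vecz = \Pi_{\cX}[\vecx]$ and $\vecz' = \Pi_{\cX}[\vecy]$ and apply the characterization twice: with $\vecv = \vecx$, $\vecw = \vecz'$ it yields $(\vecx - \vecz)^\T(\vecz' - \vecz) \leq 0$, and with $\vecv = \vecy$, $\vecw = \vecz$ it yields $(\vecy - \vecz')^\T(\vecz - \vecz') \leq 0$. Rewriting the second inequality (negating both factors) as $(\vecz' - \vecy)^\T(\vecz' - \vecz) \leq 0$ and adding it to the first, then collecting terms and using $\vecz' - \vecz = -(\vecz - \vecz')$, gives
\begin{equation*}
  \bigl[(\vecx - \vecy) - (\vecz - \vecz')\bigr]^\T(\vecz - \vecz') \geq 0,
\end{equation*}
i.e.\ $\|\vecz - \vecz'\|^2 \leq (\vecx - \vecy)^\T(\vecz - \vecz')$. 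Bounding the right-hand side by the Cauchy–Schwarz inequality and dividing by $\|\vecz - \vecz'\|$ (the case $\vecz = \vecz'$ being trivial) yields $\|\vecz - \vecz'\| \leq \|\vecx - \vecy\|$, which is the claim.

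I do not expect any genuine obstacle: this is a textbook fact (indeed exactly \cite{bertsekas2016nonlinear}, Prop.\ 1.1.4), and once the variational characterization of the projection is in place the rest is a two-line algebraic manipulation. The only point that needs care — and the only place where anything is really "proved" — is establishing that characterization itself, namely that the projection exists, is unique, and is equivalent to the stated inequality; everything hinges on strict convexity of $\|\cdot\|^2$ and the necessity-and-sufficiency of the first-order condition over a convex feasible set.
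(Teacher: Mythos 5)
Your proof is correct. The paper does not prove this lemma at all---it simply cites Proposition 1.1.4 of Bertsekas's \emph{Nonlinear Programming}---and your argument (existence and uniqueness of the projection via strict convexity, the obtuse-angle variational characterization $(\vecv - \Pi_{\cX}[\vecv])^\T(\vecw - \Pi_{\cX}[\vecv]) \leq 0$ applied twice, summing, and finishing with Cauchy--Schwarz) is precisely the standard textbook derivation that the citation points to, with the algebra carried out correctly and the degenerate case $\Pi_{\cX}[\vecx] = \Pi_{\cX}[\vecy]$ handled.
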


\section{MAIN RESULTS}
\label{sec:main}

In this section, we propose a novel distributed algorithm for the NAG in \eqref{eq:prob} and prove its linear convergence.

\subsection{Algorithm Design}

It follows from Section \ref{subsec:vi} that the NAG corresponds to the variational inequality in \eqref{eq:vi}.
We recall a standard centralized algorithm obtained by a fixed-point iteration:
\begin{equation}
  \label{eq:central}
  \x[1]{i} = \Pi_{\XX}[\x{i} - \alpha F_i(\x{i}, \barx)], \quad \forall i \in \cN.
\end{equation}
If the step-size $\alpha$ is chosen small enough, the sequences $\{\vecx^k\}_{k \in \mathbb{N}}$ converges to a fixed point of the mapping $\vecx \mapsto \Pi_{\cX}[\vecx - \alpha \phi(\vecx)]$, which is exactly $\vecx^*$ \cite{facchinei2007finite}.

However, as mentioned in the previous section, the players in the NAG have no direct access to the aggregate action $\agx$.
Thus, for all $i \in \cN$, let player $i$ maintain a local vector $\vecv_i$ and exchange it with its neighbors to estimate the aggregate action.
Then, they use the estimated aggregate action to locally update their actions.
The details are summarized in Algorithm \ref{algo}, where $\alpha > 0$ and $\beta \in (0, 1]$ are two fixed step-sizes.


\begin{algorithm}[t!]
  \caption{The distributed algorithm for the NAG \label{algo}}
  \begin{itemize}[leftmargin=*,label={}]
    \item \textbf{Initialization}: for all $i \in \cN$, set $\vecx_i^0 \in \XX$ and $\vecv_i^0 = \vecx_i^0$.
    \item \textbf{Repeat until convergence}: for all $i \in \cN$,
      \begin{subequations}
        \label{eq:algo}
        \begin{align}
            \vecvh_i^{k + 1} &=  \textstyle \sum_{j = 1}^n w_{ij} \vv{j}, \label{eq:algo-w} \\
            \hatx &= \Pi_{\XX}[\x{i} - \alpha F_i(\x{i}, \vecvh_i^{k + 1})], \label{eq:algo-hatx} \\
            \x[1]{i} &= \x{i} + \beta (\hatx - \x{i}), \label{eq:algo-x} \\
            \vv[1]{i} &= \vecvh_i^{k + 1} + \x[1]{i} - \x{i} \label{eq:algo-v}
        .\end{align}
      \end{subequations}
  \end{itemize}
\end{algorithm}

The main idea of Algorithm \ref{algo} is very natural.
Similar to other consensus-based algorithms, e.g., \cite{nedic2009distributed, qu2018harnessing,dong2020distributed},
the distributed averaging in \eqref{eq:algo-w} and \eqref{eq:algo-v} drives both $\vecvh_i$ and $\vecv_i$ to the aggregate action $\agx$,
while $\vecvh_i$ is a more accurate estimate due to one more mixing step in \eqref{eq:algo-w}.
Then, a projected-gradient step is conducted in \eqref{eq:algo-hatx} to compute a feasible direction $\hatx - \x{i}$,
and the action is updated along the direction with a step-size $\beta$ in (\ref{eq:algo-x}).
Note that $\x[1]{i}$ is a convex combination of $\x{i}$ and $\hatx$, thus also belongs to $\XX$.

Our key idea for designing Algorithm \ref{algo} lies in (\ref{eq:algo-x}) where $\beta\in(0,1)$.
Note that if $\beta = 1$ and $\alpha$ is replaced by a sequence of diminishing step-sizes;
then Algorithm \ref{algo} is reduced to the algorithm in \cite{koshal2016distributed}.
When $\beta < 1$, each player updates its action by a smaller step along the feasible direction.
As shown in Proposition \ref{prop:rate},
when the second largest singular value $\sigma$ of the consensus matrix $W$ is too large,
we necessarily need $\beta < 1$ to obtain a linear convergence.
Moreover, we can further adjust $\beta$ to reach a faster convergence rate in Section \ref{sec:numeric}.



\begin{rem}
  Consensus-based projected-gradient method is also exploited for solving distributed constrained optimization problems where multiple computational nodes cooperate to optimize a common cost function, see \cite{nedic2010constrained, liu2017convergence, dong2020distributed} for details.
  However, the algorithm cannot be directly applied to NAG since each player needs to optimize an individual cost function.
\end{rem}

\subsection{Convergence Analysis}

Firstly, we provide two lemmas that are important for the later analysis.
Then, we prove the convergence of Algorithm \ref{algo} in \eqref{eq:algo} via constructing a linear system inequality.
Finally, we derive the feasible region of the step-sizes $\alpha$ and $\beta$, and explicitly show the convergence rate.

\begin{lem}
  \label{lm:average}
  $\sumi \vv{i} = \sumi \x{i}$ holds for all $k$.
\end{lem}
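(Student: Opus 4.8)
The plan is to prove the invariant $\sumi \vv{i} = \sumi \x{i}$ by induction on $k$. The base case $k=0$ holds immediately by the initialization $\vecv_i^0 = \vecx_i^0$ for all $i$, so $\sumi \vecv_i^0 = \sumi \vecx_i^0$.

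For the inductive step, assume $\sumi \vv{i} = \sumi \x{i}$. I would sum the update \eqref{eq:algo-v} over $i \in \cN$ to get
\begin{equation*}
  \sumi \vv[1]{i} = \sumi \vecvh_i^{k+1} + \sumi \left(\x[1]{i} - \x{i}\right).
\end{equation*}
The key observation is that the averaging step \eqref{eq:algo-w} preserves the sum: using $\vecvh_i^{k+1} = \sum_{j=1}^n w_{ij} \vv{j}$ and swapping the order of summation,
\begin{equation*}
  \sumi \vecvh_i^{k+1} = \sumi \sum_{j=1}^n w_{ij} \vv{j} = \sum_{j=1}^n \left(\sumi w_{ij}\right) \vv{j} = \sum_{j=1}^n \vv{j},
\end{equation*}
where the last equality uses $\sumi w_{ij} = 1$, which is exactly the column-stochasticity of $W$ guaranteed by $W^\T \bone = \bone$ in Assumption \ref{as:graph}. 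Substituting this back,
\begin{equation*}
  \sumi \vv[1]{i} = \sumi \vv{j} + \sumi \x[1]{i} - \sumi \x{i} = \sumi \x{i} + \sumi \x[1]{i} - \sumi \x{i} = \sumi \x[1]{i},
\end{equation*}
where the induction hypothesis was applied to cancel $\sumi \vv{j}$ with $\sumi \x{i}$. This completes the induction.

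There is no real obstacle here — the lemma is a straightforward conservation identity. The only point requiring a little care is correctly invoking the column-stochasticity of $W$ (rather than row-stochasticity) when collapsing $\sumi \vecvh_i^{k+1}$; both follow from the doubly-stochastic assumption, but it is the condition $W^\T\bone = \bone$ that is used. I would state the induction cleanly and keep the algebra to the few lines above.
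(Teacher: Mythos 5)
Your proof is correct and takes essentially the same route as the paper's: sum the update \eqref{eq:algo-v} over $i$, swap the order of summation and use the column-stochasticity $\sum_{i} w_{ij} = 1$ to collapse $\sum_{i}\widehat{\vec{v}}_i^{k+1}$ into $\sum_{j}\vec{v}_j^{k}$, and close the induction from the initialization $\vec{v}_i^0 = \vec{x}_i^0$. Your write-up is merely a bit more explicit than the paper about which half of the doubly-stochastic assumption ($W^\T\bone=\bone$) is actually used.
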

\begin{proof}
  Taking the summation of \eqref{eq:algo-v} over $i$, we obtain that
  $$ \begin{aligned}
    \textstyle \sumi \vv[1]{i} &= \textstyle \sum_{j=1}^n (\sumi w_{ij}) \vv{j} \\
    & \quad + \textstyle \sumi \x[1]{i} - \sumi \x{i} \\
    &= \textstyle \sumi \x[1]{i} + \sumi \vv{i} - \sumi \x{i}.
  \end{aligned} $$
  Since each player assigns $\vecv_i^0 = \vecx_i^0$, the result is arrived at by induction.
\end{proof}

\begin{lem}
  \label{lm:ag-Lip}
  Under Assumption \ref{as:FL}, the pseudo-gradient $\phi$ is Lipschitz continuous over $\cX$ with coefficient $\Lbar = \sqrt{2}L$.
\end{lem}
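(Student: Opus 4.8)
The plan is to prove the bound directly from the coordinate structure of $\phi$. Recall that $\phi(\vecx) = F(\vecx,\agx)$ with $F(\vecx,\vecz) = [F_1(\vecx_1,\vecz);\cdots;F_n(\vecx_n,\vecz)]$ and $\agx = \fra\sumi\vecx_i$. For $\vecx,\vecx'\in\cX$ I would first write
$$ \|\phi(\vecx) - \phi(\vecx')\|^2 = \sumi \big\|F_i(\vecx_i,\agx) - F_i(\vecx_i',\agx')\big\|^2, $$
and then apply Assumption \ref{as:FL} to each term, which is legitimate since $\agx,\agx'\in\agX$. This gives $\|F_i(\vecx_i,\agx)-F_i(\vecx_i',\agx')\|^2 \le L^2\big(\|\vecx_i-\vecx_i'\|^2 + \|\agx-\agx'\|^2\big)$, and summing over $i$ yields
$$ \|\phi(\vecx) - \phi(\vecx')\|^2 \le L^2\|\vecx-\vecx'\|^2 + nL^2\|\agx-\agx'\|^2. $$

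Next I would control the aggregate term. Writing $\agx-\agx' = \fra\sumi(\vecx_i-\vecx_i')$ and using convexity of $\|\cdot\|^2$ (equivalently, the Cauchy--Schwarz inequality),
$$ \|\agx-\agx'\|^2 \le \fra\sumi\|\vecx_i-\vecx_i'\|^2 = \fra\|\vecx-\vecx'\|^2, $$
so that $nL^2\|\agx-\agx'\|^2 \le L^2\|\vecx-\vecx'\|^2$. Substituting this into the previous inequality gives $\|\phi(\vecx)-\phi(\vecx')\|^2 \le 2L^2\|\vecx-\vecx'\|^2$, and taking square roots yields the claim with $\Lbar = \sqrt{2}L$.

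I do not expect a genuine obstacle here: the argument is a short estimate once the coordinate form of $\phi$ is written out. The only delicate bookkeeping is the handling of the factor $\fra$, which appears squared in $\|\agx-\agx'\|^2$ and is multiplied by a factor $n$ (one copy coming from the outer sum over the $n$ players, one from the averaging), hence is exactly absorbed; this is precisely why the resulting Lipschitz constant $\sqrt{2}L$ is independent of the network size $n$. One should also keep in mind that the right-hand side of Assumption \ref{as:FL} is to be read with the per-player argument $[\vecx_i;\vecz]$, which is the form consistent with the stated constant.
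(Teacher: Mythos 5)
Your proof is correct and follows essentially the same route as the paper's: decompose $\|\phi(\vecx)-\phi(\vecx')\|^2$ coordinate-wise, apply Assumption \ref{as:FL} per player, and absorb the $n$ copies of $\|\agx-\agx'\|^2$ using $\|\agx-\agx'\|^2\le\fra\|\vecx-\vecx'\|^2$. The paper compresses the last two steps into a single inequality, but the bookkeeping you spell out (including the reading of the Lipschitz bound with the per-player argument $[\vecx_i;\vecz]$) is exactly what its argument relies on.
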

\begin{proof}
  From the definition of the pseudo-gradient in Section \ref{subsec:vi} we have that for any $\vecx, \vecx' \in \cX$,
  \begin{equation*}
    \begin{aligned}
      \|\phi(\vecx) - \phi(\vecx')\|^2 &= \textstyle \sumi \|F_i(\vecx_i, \agx) - F_i(\vecx_i', \agx')\|^2 \\
      & \leq L^2 \textstyle \sumi (\|\vecx_i - \vecx_i'\|^2 + \|\agx - \agx'\|^2) \\
      & \leq 2 L^2 \textstyle \|\vecx - \vecx'\|^2.
    \end{aligned}
  \end{equation*}
  Thus, $\phi$ is Lipschitz continuous with coefficient $\sqrt{2}L$.
\end{proof}

In the following analysis, we denote
\begin{equation*}
  \begin{aligned}
    \X &= [\x{1}, \x{2}, \cdots, \x{n}]^\T, &
      \widehat{X}^k &= [\widehat{\vecx}_1^k, \widehat{\vecx}_2^k, \cdots, \widehat{\vecx}_n^k]^\T, \\
    X^* &= [\xstar_1, \xstar_2, \cdots, \xstar_n]^\T, & \V &= [\vv{1}, \vv{2}, \cdots, \vv{n}]^\T,
  \end{aligned}
\end{equation*}
and $\barX = \bone (\barx)^\T$.
Then,  \eqref{eq:algo-x} and \eqref{eq:algo-v} can be compactly written as
\begin{equation*}
  \begin{aligned}
    \X[1] &= \beta \hatX + (1 - \beta) \X, \\
    \V[1] &= W \V + \X[1] - \X.
  \end{aligned}
\end{equation*}

\begin{prop}
  \label{prop:linearSys}
  Under Assumptions \ref{as:game}-\ref{as:graph}, when $\alpha$ and $\beta$ are small enough such that the matrix
  \begin{equation*}
    M = \begin{bmatrix}
      1 - \alpha \beta (\mu - 2 \alpha \Lbar^2) &
        \alpha \beta L^2 (1 / \mu + 2 \alpha) \\
      \frac{8\beta^2 - 4 \alpha \beta^3 (\mu - 2 \alpha \Lbar^2)}{1 - \sigma^2} &
        \frac{4\alpha \beta^3 L^2 (1 / \mu + 2 \alpha)}{1 - \sigma^2} + \frac{2 \sigma^2}{1 + \sigma^2}
    \end{bmatrix}
  \end{equation*}
  has spectral radius $\rho(M) < 1$.
  Then, $\|\X - X^*\|_F^2$ and $\|\V - \barX\|_F^2$ are all decaying with rate $O(\rho(M)^k)$.
\end{prop}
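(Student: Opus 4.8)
The plan is to track the two error quantities $a_k \triangleq \|\X - X^*\|_F^2$ (the optimality gap) and $b_k \triangleq \|\V - \barX\|_F^2$ (the consensus/tracking error), and to show each obeys an inequality of the form $[a_{k+1}; b_{k+1}] \preceq M [a_k; b_k]$ entrywise (with nonnegative $M$), so that Proposition~\ref{prop:rate}-style reasoning gives the stated $O(\rho(M)^k)$ decay once $\rho(M)<1$. The first task is the \emph{descent estimate} for $a_k$. I would start from the compact update $\X[1] = \beta\hatX + (1-\beta)\X$, expand $\|\X[1] - X^*\|_F^2$, and use that $\hatX$ is a projected-gradient step so that each row $\hatx = \Pi_{\XX}[\x{i} - \alpha F_i(\x{i},\vecvh_i^{k+1})]$. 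The standard trick here is to compare against the fixed point $\xstar_i = \Pi_{\XX}[\xstar_i - \alpha F_i(\xstar_i,\barx^*)]$ (which follows because $\vecx^*$ solves the VI~\eqref{eq:vi}), invoke the nonexpansiveness of $\Pi_{\XX}$ (Lemma~\ref{lm:projection}), and then the strong monotonicity (Assumption~\ref{as:monotone}) plus Lipschitzness of $F_i$ (Assumption~\ref{as:FL}) to get a contraction factor $1-\Theta(\alpha\mu)$ on the part of the error aligned with the exact pseudo-gradient, at the cost of a cross term controlled by the gradient-evaluation error $\|\vecvh^{k+1} - \barX[1]\|_F$ or similar, weighted by $L$. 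The appearance of $\mu - 2\alpha\Lbar^2$ and $L^2(1/\mu + 2\alpha)$ in the first row of $M$ signals exactly this: a Young's inequality with parameter $1/\mu$ splitting the cross term, one factor of $\alpha\beta$ from the combined step sizes, and the $2\alpha\Lbar^2$ correction coming from also having to move the aggregate-action argument. So the first row of $M$ should drop out of a careful but routine expansion.

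Next I would establish the \emph{consensus recursion} for $b_k$. Using Lemma~\ref{lm:average} we know $\sumi \vv{i} = \sumi \x{i}$, so the row-average of $\V$ equals $\barx^k$ and $\barX = \bone(\barx^k)^\T$ is the correct projection of $\V$ onto the consensus subspace. From $\V[1] = W\V + \X[1] - \X$ and the averaging property $W\bone\bone^\T/n = \bone\bone^\T/n$, I subtract off $\barX[1]$; the $W$ part contracts by $\sigma$ (Lemma~\ref{lm:spectrum}) and the inhomogeneous part is $(\X[1]-\X) - \tfrac1n\bone\bone^\T(\X[1]-\X)$, which is bounded by $\|\X[1]-\X\|_F = \beta\|\hatX - \X\|_F$. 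Then I would use Young's inequality with the canonical weight that turns $(\sigma\cdot + t)^2$ into $\tfrac{1+\sigma^2}{2}(\text{something}) $, producing the $\tfrac{2\sigma^2}{1+\sigma^2}$ term and a $\tfrac{1}{1-\sigma^2}$-weighted remainder. The remaining job is to bound $\|\hatX - \X\|_F^2$: writing $\hatX - \X = (\hatX - X^*) + (X^* - \X)$ and again using nonexpansiveness of the projection applied to $\x{i} - \alpha F_i(\x{i},\vecvh_i^{k+1})$ versus $\xstar_i - \alpha F_i(\xstar_i,\barx^*)$, this is controlled by $a_k$ and by the gradient-argument error $b_{k+1}$ (which in turn, via $\vecvh^{k+1} = W\V$, is controlled by $b_k$ through $\sigma$). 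Chasing these gives the $8\beta^2$ and $4\alpha\beta^3$ coefficients in the second row, with the same $\mu - 2\alpha\Lbar^2$ and $L^2(1/\mu+2\alpha)$ blocks reappearing because they came from bounding the same projected-gradient displacement.

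The main obstacle, and the step I would treat most carefully, is the bookkeeping of the \emph{gradient-evaluation error} $\vecvh_i^{k+1}$ versus $\barx^*$ (and versus $\barx^k$): one has to split it as $\vecvh_i^{k+1} - \barx^k$ plus $\barx^k - \barx^*$, recognize that $\|\vecvh^{k+1} - \bone(\barx^k)^\T\|_F = \|W(\V - \barX)\|_F \le \sigma\sqrt{b_k}$ by Lemma~\ref{lm:spectrum}, and that $\|\bone(\barx^k - \barx^*)^\T\|_F \le \tfrac{1}{\sqrt n}\|\X - X^*\|_F \le \sqrt{a_k}$ since $\barx^k$ is the average of the rows of $\X$; so every gradient error is a controlled mix of $a_k$ and $\sigma^2 b_k$. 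Keeping the $\alpha$-powers, $\beta$-powers, and the $\sigma$ vs. $1-\sigma^2$ factors consistent through two nested applications of Young's inequality is where mistakes would hide. Once both rows are in hand, the proposition follows immediately: $M$ is entrywise nonnegative, so $[a_k;b_k] \preceq M^k [a_0;b_0]$, and when $\rho(M)<1$ the Perron eigenvalue bound (or simply $\|M^k\| \to 0$) yields $a_k, b_k = O(\rho(M)^k)$, which is the claimed statement. I would leave the actual range of $(\alpha,\beta)$ making $\rho(M)<1$ to the subsequent Proposition~\ref{prop:rate}, here only noting that for $\alpha,\beta$ small enough $M \to \mathrm{diag}(1,\tfrac{2\sigma^2}{1+\sigma^2})$ whose spectral radius is $1$, and a first-order perturbation argument in $\alpha\beta$ shows the dominant eigenvalue drops below $1$.
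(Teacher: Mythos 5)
Your proposal follows essentially the same route as the paper: the same two error quantities, the same projected-gradient/fixed-point comparison with nonexpansiveness, strong monotonicity, and a Young's split producing the $\mu-2\alpha\Lbar^2$ and $L^2(1/\mu+2\alpha)$ blocks; the same consensus recursion with the $\tfrac{2\sigma^2}{1+\sigma^2}$, $\tfrac{1}{1-\sigma^2}$ weights and the bound $\|W V^k-\overline{X}^k\|_F\le\sigma\|V^k-\overline{X}^k\|_F$; and the same entrywise linear-system recursion $\zeta^{k+1}\preceq M\zeta^k$ at the end. The only quibbles are cosmetic (e.g., the intermediate bound $\|\bone(\overline{\vecx}^k-\overline{\vecx}^*)^\T\|_F\le\tfrac{1}{\sqrt n}\|X^k-X^*\|_F$ overstates by a factor of $\sqrt n$, though the conclusion $\le\sqrt{a_k}$ is correct, and the paper in fact avoids that split by using strong monotonicity and Lipschitzness of $\phi$ directly on $F^k-F^*$), so the plan is sound and matches the paper's proof.
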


\begin{proof}
We first bound $\|\X[1] - X^*\|_F^2$ and $\|\V[1] - \barX[1]\|_F^2$ by linear combinations of their values in the last iteration.
Then, a linear system inequality is derived which implies the convergence rate. 

\textbf{Step 1: Bound $\|\X[1] - X^*\|_F^2$.} It follows from \eqref{eq:algo-hatx} that
\begin{equation}
  \label{eq:hatxBound}
  \begin{aligned}
    & \|\hatx - \xstar_i\|^2 \\
    & = \|\Pi_{\XX}[\x{i} - \alpha \hatF[i]] - \Pi_{\XX}[\xstar_i - \alpha \F[*]{i}]\|^2 \\
    & \leq \|\x{i} - \xstar_i - \alpha(\hatF[i] - \F[*]{i})\|^2 \\
    & = \|\x{i} - \xstar_i\|^2 + \alpha^2 \|\hatF[i] - \F[*]{i}\|^2 \\
    & \quad - 2 \alpha (\x{i} - \xstar_i)^\T(\hatF[i] - \F[*]{i}),
  \end{aligned}
\end{equation}
where we have used $\hatF[i]$ and $\F[*]{i}$ to respectively denote $F_i(\x{i}, \vecvh_i^{k + 1})$ and $F_i(\xstar_i, \xbarstar)$ for simplicity.
The inequality follows from Lemma \ref{lm:projection}.
Denote $F_i(\x{i}, \barx)$ by $\F{i}$; then
\begin{equation}
  \label{eq:FBreak}
  \begin{aligned}
    \|\hatF[i] - \F[*]{i}\|^2 & = \|\hatF[i] - \F{i}\|^2 + \|\F{i} - \F[*]{i}\|^2 \\
    & \quad + 2(\hatF[i] - \F{i})^\T (\F{i} - \F[*]{i}) \\
    & \leq L^2 \|\vecvh_i^{k + 1} - \barx\|^2 + \|\F{i} - \F[*]{i}\|^2 \\
    & \quad + 2(\hatF[i] - \F{i})^\T (\F{i} - \F[*]{i})
  \end{aligned}
\end{equation}
and
\begin{equation}
  \label{eq:crossBreak}
  \begin{aligned}
    & (\x{i} - \xstar_i)^\T(\hatF[i] - \F[*]{i}) \\
    & = (\x{i} - \xstar_i)^\T(\hatF[i] - \F{i}) + (\x{i} - \xstar_i)^\T(\F{i} - \F[*]{i}) .
  \end{aligned}
\end{equation}
Plugging \eqref{eq:FBreak}, \eqref{eq:crossBreak} into \eqref{eq:hatxBound} and taking the summation over $i$, we have
\begin{equation*}
  \begin{aligned}
    &\|\hatX - X^*\|_F^2 \\
    & \leq \|\X - X^*\|_F^2 + \alpha^2 L^2 \|W\V - \barX\|_F^2 \\
    & \quad + \alpha^2 \|F^k - F^*\|_F^2 - 2 \alpha \cdot \langle \X - X^*, F^k - F^* \rangle_F \\
    & \quad + 2 \alpha^2 \cdot \langle \hatF - F^k, F^k - F^* \rangle_F \\
    & \quad - 2 \alpha \cdot \langle \X - X^*, \hatF - F^k \rangle_F \\
    & \leq \|\X - X^*\|_F^2 + \alpha^2 L^2 \|W\V - \barX\|_F^2 \\
    & \quad + \alpha^2 \Lbar^2 \|\X - X^*\|_F^2 - 2 \alpha \mu \|\X - X^*\|_F^2 \\
    & \quad + \alpha^2 \Lbar^2 \|\X - X^*\|_F^2 + \alpha^2 L^2 \|W\V - \barX\|_F^2 \\
    & \quad + \alpha \mu \|\X - X^*\|_F^2 + \alpha (L^2 / \mu) \cdot \|W\V - \barX\|_F^2,
  \end{aligned}
\end{equation*}
where the second inequality dues to Cauchy-Schwarz inequality and mean inequality.
Namely,
\begin{equation}
  \label{eq:hatXBound}
  \begin{aligned}
    \|\hatX - X^*\|_F^2 &\leq (1 - \alpha \mu + 2 \alpha^2 \Lbar^2) \|\X - X^*\|_F^2 \\
    & \quad + (\alpha / \mu + 2 \alpha^2) L^2 \sigma^2 \|\V - \barX\|_F^2,
  \end{aligned}
\end{equation}

It follows from \eqref{eq:algo-x} that
\begin{equation}
  \label{eq:XwithhatX}
  \begin{aligned}
    & \|\X[1] - X^*\|_F^2 \\
    & = \beta^2 \|\hatX - X^*\|_F^2 + (1 - \beta)^2 \|\X - X^*\|_F^2 \\
    & \quad  + 2 \beta (1 - \beta) (\hatX - X^*)^\T(\X - X^*) \\
    & \leq \beta \|\hatX - X^*\|_F^2 + (1 - \beta) \|\X - X^*\|_F^2.
  \end{aligned}
\end{equation}
Plugging \eqref{eq:hatXBound} into \eqref{eq:XwithhatX}, we have
\begin{equation}
  \label{eq:XBound}
  \begin{aligned}
    \|\X[1] - X^*\|_F^2 &\leq (1 - a) \|\X - X^*\|_F^2 \\
    & \quad + b \|\V - \barX\|_F^2,
  \end{aligned}
\end{equation}
where
\begin{equation}
  \label{eq:def-a-b}
  \begin{aligned}
    a &= \alpha \beta (\mu - 2 \alpha \Lbar^2) \\
    b &= \alpha \beta \sigma^2 L^2 (1 / \mu + 2 \alpha).
  \end{aligned}
\end{equation}

\textbf{Step 2: Bound $\|\V[1] - \barX[1]\|_F^2$.} By the update rule \eqref{eq:algo-v},
\begin{equation}
  \label{eq:Vnorm}
  \begin{aligned}
    & \|\V[1]_{:j} - \barX[1]_{:j}\| \\
    & \leq \textstyle \|(W - \fra \bone \bone^\T)(\V_{:j} - \barX_{:j})\| \\
    & \quad + \|\X[1]_{:j} - \X_{:j} - \barX[1]_{:j} + \barX_{:j}\| \\
    & \leq \sigma \|\V_{:j} - \barX_{:j}\| \\
    & \quad + \beta \|\hatX_{:j} - \X_{:j} - \textstyle \frac{1}{n} \bone_n \cdot \sumi(\hatX_{ij} - \X_{ij})\|.
  \end{aligned}
\end{equation}
It is easy to verify that
\begin{equation*}
  \begin{aligned}
    & \|\hatX_{:j} - \X_{:j} - \textstyle \frac{1}{n} \bone_n \cdot \sumi(\hatX_{ij} - \X_{ij})\|^2 \\
    & = \|\hatX_{:j} - \X_{:j}\|^2 + \textstyle \frac{1}{n} [\sumi(\hatX_{ij} - \X_{ij})]^2 \\
    & \quad - \textstyle \frac{2}{n} [\sumi(\hatX_{ij} - \X_{ij})] \cdot \bone_n^\T(\hatX_{:j} - \X_{:j}) \\
    & \leq \|\hatX_{:j} - \X_{:j}\|^2.
  \end{aligned}
\end{equation*}
Plugging this into \eqref{eq:Vnorm} and squaring the both sides, we have
\begin{equation}
  \label{eq:Vj}
  \begin{aligned}
    & \|\V[1]_{:j} - \barX[1]_{:j}\|^2 \\
    & \leq \textstyle \frac{2 \sigma^2}{1 + \sigma^2} \|\V_{:j} - \barX_{:j}\|^2 + \frac{2\beta^2}{1 - \sigma^2} \|\hatX_{:j} - \X_{:j}\|^2 \\
    & \leq \textstyle \frac{2 \sigma^2}{1 + \sigma^2} \|\V_{:j} - \barX_{:j}\|^2 + \frac{4\beta^2}{1 - \sigma^2} \|\X_{:j} - X^*_{:j}\|^2 \\
    & \quad + \textstyle \frac{4\beta^2}{1 - \sigma^2} \|\hatX_{:j} - X^*_{:j}\|^2.
  \end{aligned}
\end{equation}
Letting
\begin{equation}
  \label{eq:def-c-d}
    c = \frac{4\beta^2}{1 - \sigma^2}, \quad d = \frac{2 \sigma^2}{1 + \sigma^2},
\end{equation}
taking the summation over $j$ in \eqref{eq:Vj}, and combining \eqref{eq:hatXBound}, we have
\begin{equation}
  \label{eq:VBound}
  \begin{aligned}
    & \|\V[1] - \barX[1]\|_F^2 \\
    & \leq d \|\V - \barX\|_F^2 + c \|\X - X^*\|_F^2 \\
    & \quad + c \|\hatX - X^*\|_F^2 \\
    & \leq c(2-a) \|\X - X^*\|_F^2 + (d + cb) \|\V - \barX\|_F^2.
  \end{aligned}
\end{equation}

\textbf{Step 3: Derive a linear system inequality.} Combining \eqref{eq:XBound} and \eqref{eq:VBound}, we have
\begin{equation}
  \label{eq:linearSys}
  \begin{aligned}
    & \begin{bmatrix}
      \|\X[1] - X^*\|_F^2 \\ \|\V[1] - \barX[1]\|_F^2
    \end{bmatrix} \\
    & \leq \overbrace{\begin{bmatrix}
      1 - a & b \\
      c(2 - a) & bc + d
    \end{bmatrix}}^{\triangleq M \in \mathbb{R}^{2 \times 2}}
    \overbrace{\begin{bmatrix}
      \|\X - X^*\|_F^2 \\ \|\V - \barX\|_F^2
    \end{bmatrix}}^{\triangleq \zeta^k \in \mathbb{R}^{2}},
  \end{aligned}
\end{equation}
where $a$, $b$, $c$ and $d$ are defined in \eqref{eq:def-a-b} and \eqref{eq:def-c-d}.
Since $M$ and $\zeta^k$ have non-negative entries, we can expand \eqref{eq:linearSys} recursively and obtain
\begin{equation*}
  \zeta^k \leq M^k \zeta^0.
\end{equation*}

Thus, if $\alpha$ and $\beta$ is such that $\rho(M) < 1$; then
$\|\X - X^*\|_F^2$ and $\|\V - \barX\|_F^2$ are all decaying with rate $O(\rho(M)^k)$.
\end{proof}

Next we explicitly show the convergence rate of Algorithm \ref{algo}.

\begin{prop}
  \label{prop:rate}
  When
  \begin{equation}
    \label{eq:ss-bound}
    \begin{aligned}
      0 &< \beta \leq \min \left\{ 1, \frac{\mu (1 - \sigma^2)}{2 \sigma L \sqrt{7 + 11 \sigma^2}} \right\}, \\
      0 &< \alpha \leq \frac{\mu^2(1 - \sigma^2)^2 - 4 (7 + 11 \sigma^2) (\sigma L \beta)^2}{2 \mu \Lbar^2 (1 - \sigma^2)^2 + 8 \mu (7 + 11 \sigma^2) (\sigma L \beta)^2},
    \end{aligned}
  \end{equation}
  we have
  $$ \rho(M) < 1 - \frac{\alpha \beta}{2} \left[ \mu - 2 \alpha \Lbar^2 - \frac{4 (\sigma L \beta)^2}{1 - \sigma^2} \left( \frac{1}{\mu} + 2 \alpha \right) \right] < 1. $$
\end{prop}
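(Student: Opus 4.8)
The plan is to control $\rho(M)$ through the characteristic polynomial of the $2\times 2$ nonnegative matrix $M$ in Proposition~\ref{prop:linearSys}. With $a,b,c,d$ as in \eqref{eq:def-a-b}--\eqref{eq:def-c-d}, this polynomial is $p(\lambda)=\lambda^2-\tau\lambda+\delta$, where $\tau=\tr M=1-a+bc+d$ and $\delta=\det M=d(1-a)-bc$. Since every entry of $M$ is nonnegative, the discriminant $\tau^2-4\delta=(1-a-bc-d)^2+4bc(2-a)$ is nonnegative, so both eigenvalues are real and $\rho(M)$ is the larger root of $p$. Hence, for any candidate value $t$ one has the elementary equivalence
\[
  \rho(M)<t \quad\Longleftrightarrow\quad p(t)>0 \ \text{ and }\ 2t>\tau ,
\]
i.e.\ $p(t)>0$ together with $t$ lying strictly to the right of the vertex of the upward parabola $p$. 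I will apply this with $t:=1-\tfrac12(a-bc)$, which equals the bound in the statement once one notes $a=\alpha\beta(\mu-2\alpha\Lbar^2)$ and $bc=\alpha\beta\cdot\tfrac{4(\sigma L\beta)^2}{1-\sigma^2}\bigl(\tfrac1\mu+2\alpha\bigr)$.

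The vertex condition is immediate: $2t-\tau=(2-a+bc)-(1-a+bc+d)=1-d=\tfrac{1-\sigma^2}{1+\sigma^2}>0$ by Lemma~\ref{lm:spectrum}. This also yields $t>\tau/2$, so that $p(t)>0$ genuinely forces $t>\rho(M)$, and it gives $t<1$ as soon as $a-bc>0$. Thus it remains to prove $a-bc>0$ and $p(t)>0$.

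A short computation expanding $p(t)$ with the shorthand $q:=a-bc$ gives the clean identity $p(t)=\tfrac{q}{2}\bigl(1-\tfrac q2-d\bigr)-bc(1+d)$, so the proposition reduces to the two scalar facts $q>0$ and $\tfrac q2\bigl(1-\tfrac q2-d\bigr)>bc(1+d)$. The step-size window \eqref{eq:ss-bound} is tailored precisely for these. The bound on $\beta$ — equivalent to $4(7+11\sigma^2)(\sigma L\beta)^2\le\mu^2(1-\sigma^2)^2$ — makes the numerator of the $\alpha$-bound nonnegative, forces $q>0$, and keeps $q$ small enough that $1-\tfrac q2-d>0$; the bound on $\alpha$, after clearing denominators and using $1-d=\tfrac{1-\sigma^2}{1+\sigma^2}$, $1+d=\tfrac{1+3\sigma^2}{1+\sigma^2}$ and $bc=\tfrac{4\alpha\beta^3\sigma^2L^2}{1-\sigma^2}(\tfrac1\mu+2\alpha)$, is exactly the rearrangement of $\tfrac q2(1-\tfrac q2-d)>bc(1+d)$, with the constant $7+11\sigma^2$ appearing once the lower-order-in-$\alpha$ contributions (the $2\alpha\Lbar^2$ inside $a$, the $2\alpha$ inside $\tfrac1\mu+2\alpha$, and the quadratic term $\tfrac{q^2}{4}$) are absorbed against the leading balance $\mu^2(1-\sigma^2)\gtrsim 4\sigma^2L^2\beta^2(3+5\sigma^2)$. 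I expect this constant-tracking — verifying that \eqref{eq:ss-bound} is exactly what makes $\tfrac q2(1-\tfrac q2-d)>bc(1+d)$ hold, with a strict margin — to be the only delicate part; everything else is the routine root-location argument above.

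As an alternative to the characteristic polynomial, one may instead exhibit a positive vector $w=[1;\theta]^\T$ with $Mw<tw$ componentwise: the two rows read $b\theta<\tfrac{a+bc}{2}$ and $c(2-a)<(t-bc-d)\theta$ with $t-bc-d=1-d-\tfrac{a+bc}{2}>0$, and a common admissible $\theta$ exists under the very same inequality $\tfrac q2(1-\tfrac q2-d)>bc(1+d)$; then $\rho(M)\le\max_i(Mw)_i/w_i<t$ closes the argument. Either route reduces to the same core inequality, so the plan and its main obstacle are unchanged.
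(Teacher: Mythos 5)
Your proposal is correct and is essentially the paper's own argument: the paper shows the discriminant satisfies $\sqrt{\Delta}<1-d$, and since $\Delta=(1-d-q)^2+4bc(1+d)$ this is algebraically identical to your pair of conditions $p(1-\tfrac{q}{2})>0$ and $2t>\tau$, both routes reducing to the single core inequality $\tfrac{q}{2}(1-\tfrac{q}{2}-d)>bc(1+d)$ together with $q>0$. The constant-tracking you defer does go through: the $\alpha$-bound in \eqref{eq:ss-bound} is precisely the rearrangement of $a(1-d)\ge(7+2d)\,bc$ (using $1-d=\tfrac{1-\sigma^2}{1+\sigma^2}$ and $(7+2d)(1+\sigma^2)=7+11\sigma^2$), and since $a<1-d$ gives $1-\tfrac{q}{2}-d>\tfrac{1-d}{2}$, the core inequality already follows from the weaker bound $a(1-d)>(5+3d)\,bc$, so the window \eqref{eq:ss-bound} suffices with slack.
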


\begin{proof}
Since $0 < \alpha < \frac{\mu}{2 \Lbar^2}$, $0 < a < 1$, the characteristic polynomial $p(\lambda)$ of $M$ is
$$ p(\lambda) = \lambda^2 + (a - bc - d - 1) \lambda + [(1 - a)d - bc], $$
where $a$, $b$, $c$ and $d$ are defined in \eqref{eq:def-a-b} and \eqref{eq:def-c-d}.
The two roots of $p(\lambda)$ are $\frac{-B \pm \sqrt{\Delta}}{2}$, where
$$ \begin{aligned}
  B & = a - bc - d - 1 < 0, \\
  \Delta & = B^2 - 4[(1 - a)d - bc] > 0.
\end{aligned} $$
Thus, the spectral radius of $M$ is $\rho(M) = \frac{-B + \sqrt{\Delta}}{2}$.
Since
$$\beta \leq 1 < \frac{8 \Lbar^2 (1 + \sigma^2)}{\mu^2 (1 - \sigma^2)};$$
we have $a < 1 - d$.
Besides, in view of the upper bound of $\alpha$ and $\beta$ in \eqref{eq:ss-bound}, we have $a(1 - d) > (7 + 2d) bc$ and $a > bc$.
Then, we can obtain that $\Delta < (1 - d) ^2$. Thus,
\begin{equation*}
  \rho(M) < 1 - \frac{a - bc}{2} < 1,
\end{equation*}
which is the desired result.
\end{proof}

\begin{rem}
  When $\sigma$ of the consensus matrix $W$ is smaller, we can obtain a faster convergence rate.
  Besides, if $\sigma$ is small enough such that we can choose $\beta = 1$, then Algorithm \ref{algo} can be simplified.
  Otherwise, we must set $\beta < 1$ to theoretically guarantee the linear convergence.
\end{rem}

\section{NUMERICAL EXPERIMENTS}
\label{sec:numeric}

We consider the Nash-Cournot games in \cite{koshal2016distributed}.
Consider $n = 20$ firms competing over $L = 10$ locations, each firm $i$ needs to decide its production $g_{il}$ and sales $s_{il}$ at location $l$.
The cost of production of firm $i$ at location $l$ is $c_{il}(g_{il})$ and is defined as
$$ c_{il}(g_{il}) = a_{il} g_{il} + b_{il} g_{il}^2, $$
where $a_{il}$ and $b_{il}$ are parameters for firm $i$.
Firm $i$'s revenue at location $l$ is $s_{il}p_l(s_l)$, where $s_l = \sumi s_{il}$ is the total sales at location $l$ and $p_l$ is the corresponding price function.
The price function $p_l$ captures the reverse demand function and is defined as
$$ p_l(s_l) = d_l - s_l, $$
where $d_l$ is a parameter for location $l$.
The production capacity of firm $i$ at location $l$ is denoted by $\tcap_{il}$.
Assuming that the transportation fees between any two locations are zero; then, firm $i$'s optimization problem is:
\begin{equation*}
  \begin{aligned}
    \text{minimize} & \quad \textstyle \sum_{l=1}^L [c_{il}(g_{il}) - s_{il} \cdot p_l(s_l)], \\
    \text{subject to} & \quad g_{il},s_{il}\geq 0,\ g_{il}\leq \text{cap}_{il}, \\
    & \quad \textstyle \sum_{l=1}^L g_{il} = \sum_{l=1}^L s_{il}.
  \end{aligned}
\end{equation*}
It is easy to verify that the problem and the corresponding pseudo-gradient mapping satisfy Assumptions \ref{as:game}-\ref{as:FL}.
In the simulations, we set $a_{il} \sim U(2, 12)$, $b_{il} \sim U(2, 3)$, $d_l \sim U(90, 100)$ and $\tcap_{il} = 500$ for $i = 1, \cdots n$ and $l = 1, \cdots, L$, where $U(u_1, u_2)$ is the uniform distribution over the interval $[u_1, u_2]$.

We first validate Algorithm \ref{algo} over three network topologies:
\begin{itemize}
  \item \textit{Linear}: player $i$ communicates with players $\text{mod}(i + 11j, n) + 1$, $0 \leq j < n / 11$.
  \item \textit{Log}: player $i$ communicates with players $\text{mod}(i + 2^j, n) + 1$, $0 \leq j < \log_2 n$.
  \item \textit{Complete}: every player communicates with every other players.
\end{itemize}
The weight matrix $W = \{w_{ij}\}$ is set as
$$ w_{ij} = \begin{cases}
  0, & \text{if } (i, j) \notin \cE, \\
  \delta, & \text{if } (i, j) \in \cE \text{ and } i \neq j, \\
  1 - \delta \cdot d(i), & \text{if } i = j,
\end{cases} $$
where $d(i)$ is the number of players communicating with player $i$, and
$ \delta = 0.5 / \max_{i} \{d(i)\}. $

Fig. \ref{fig:topos} depicts the trajectories of suboptimality gap $\|\vecx^k - \vecx^*\|$ versus iterations over the three network topologies.
All the trajectories start with the same initial point, and the step-sizes are tuned to be optimal.
It can be seen that Algorithm \ref{algo} converges linearly over different topologies.
Moreover, when the network has stronger connectivity, i.e., a smaller $\sigma$ that defined in Lemma \ref{lm:spectrum}, Algorithm \ref{algo} achieves a faster convergence rate.

\begin{figure}[t!]
  \centering
  \includegraphics[width=.65\linewidth]{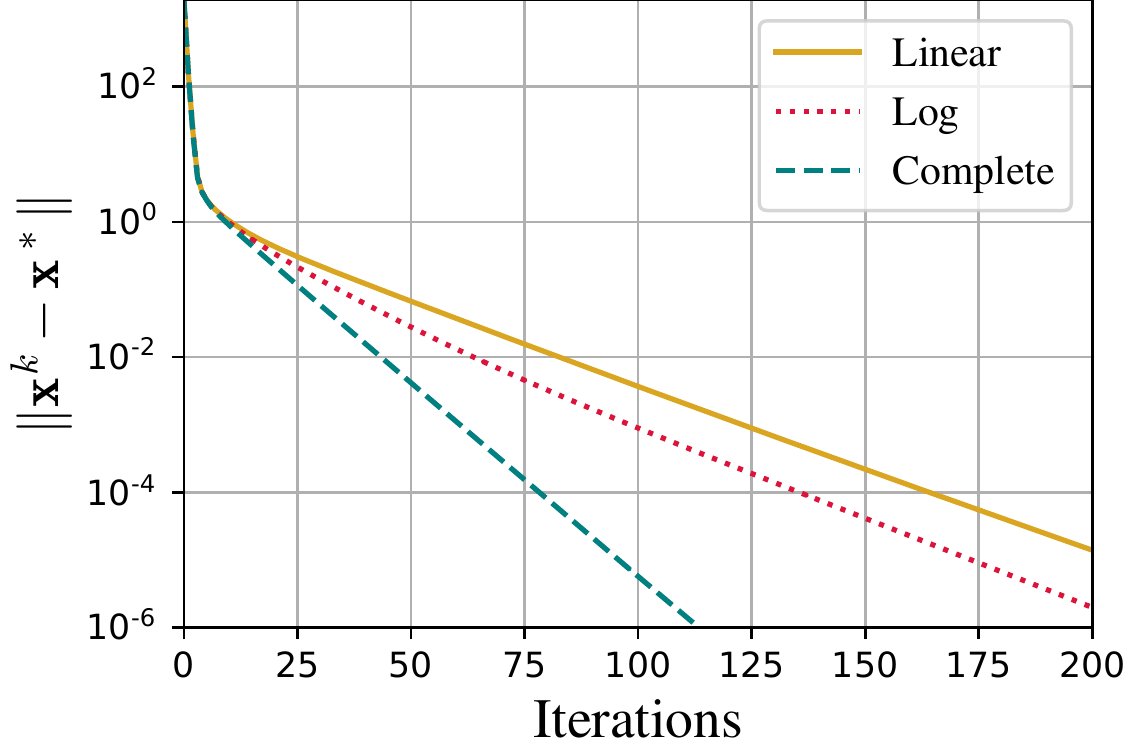}
  \caption{The trajectories of suboptimality gap vs. iterations over different network topologies.}
  \label{fig:topos}
\end{figure}

Then, over the \textit{linear} topology, we fix different values of $\beta$ and search the corresponding optimal value of $\alpha$.
Then, we run Algorithm \ref{algo} for 200 iterations and obtain the suboptimality gap $\epsilon_{200} = \|\vecx^{200} - \vecx^*\|$.
As shown in Table \ref{tb:beta}, a smaller $\beta$ usually leads to a bigger $\alpha$, which coincides with Proposition \ref{prop:rate}.
It cannot be theoretically proved whether Algorithm \ref{algo} converges linearly for $\beta = 1$.
However, our experiment shows that when $\beta$ is slightly less than 1, we can reach a faster convergence rate, i.e., obtain a smaller $\epsilon$.

\begin{table}[t!]
  \caption{Suboptimality Gap on Termination vs. Step-sizes.}
  \label{tb:beta}
  \centering
  \begin{tabular}{|c||c|c|c|c|c|}
    \hline
    $\beta$ & 0.8 & 0.85 & 0.9 & 0.95 & 1.0 \\
    \hline
    $\alpha$ & 0.108 & 0.092 & 0.096 & 0.090 & 0.087 \\
    \hline
    $\epsilon_{200}$ ($\times 10^{-5}$) & 1.615 & 1.622 & \textbf{1.418} & 1.512 & 1.591 \\
    \hline
  \end{tabular}
\end{table}

Finally, still over the \textit{linear} topology, we compare Algorithm \ref{algo} with algorithms in \cite{bianchi2021fully} and \cite{koshal2016distributed},
of which the former is designed for games where cost functions depend on actions in a general manner,
and the latter is designed for NAGs but only with diminishing step-sizes.
Fig. \ref{fig:compare} shows that Algorithm \ref{algo} outperforms the others.
Algorithm in \cite{bianchi2021fully} gets stuck in the beginning phase, which is due to each players' mixture of neighbors faulty estimates of their own action.
Although it converges linearly in the long run, the convergence rate is slower than Algorithm \ref{algo}.
As for algorithm in \cite{koshal2016distributed}, since it uses diminishing step-sizes,
the convergence rate is sublinear and also inferior to Algorithm \ref{algo}.

\begin{figure}[t!]
  \centering
  \includegraphics[width=.65\linewidth]{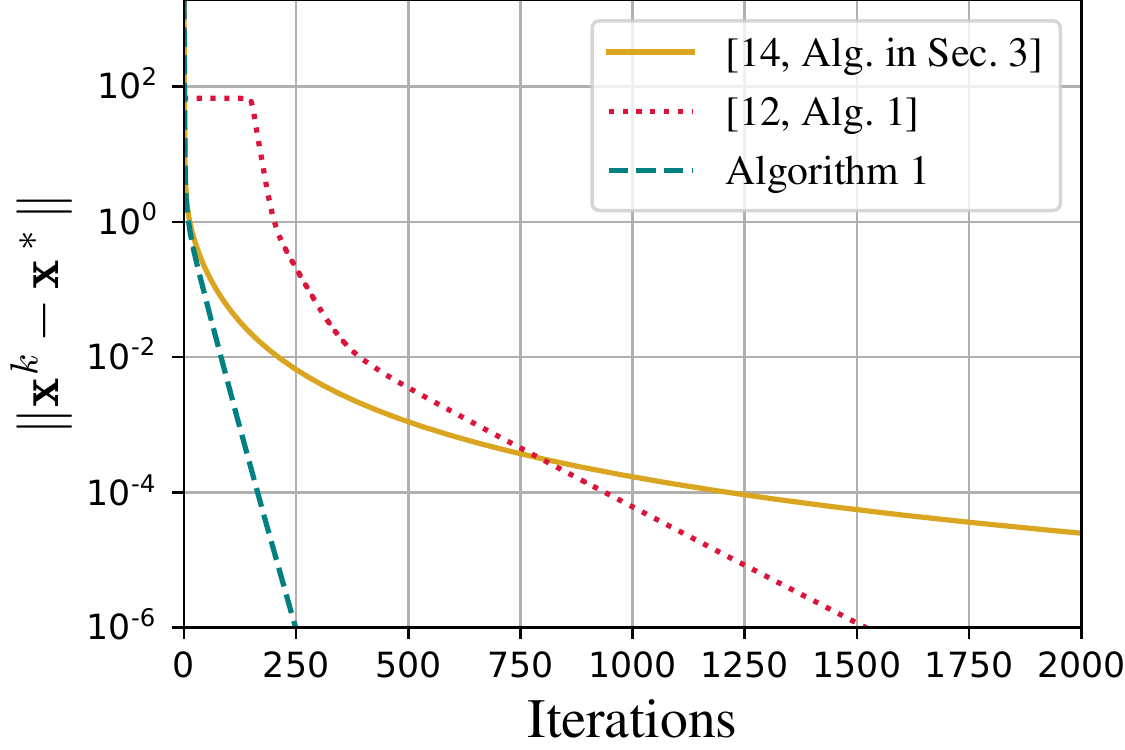}
  \caption{The trajectories of suboptimality gap vs. iterations over different network topologies.}
  \label{fig:compare}
\end{figure}

\section{CONCLUSIONS AND FUTURE WORKS}
\label{sec:concousion}

In this paper, we have proposed a fully distributed algorithm for the NAGs over an undirected communication network.
When the NAG possesses a strongly monotone and Lipschitz continuous pseudo-gradient mapping, the proposed algorithm with fixed step-sizes is proved to converge linearly to the unique NE.
Theoretical results have been validated by numerical experiments.

Future work will consider linearly convergent algorithms for NAGs over directed communication networks or time-varying networks.
Fully asynchronous extension of the proposed algorithm is also an open and difficult problem.




\bibliographystyle{ieeetr}
\bibliography{ref}

\end{document}